\newtheorem{Theorem}{Theorem}[section]
\newtheorem{Lemma}[Theorem]{Lemma}
\newtheorem{Definition}{Definition}
\newtheorem{Remark}[Theorem]{Remark} 
\numberwithin{equation}{section}
\newcommand{\R}{\mathbb R}
\newcommand{\C}{\mathbb C}
\newcommand{\Z}{\mathbb Z}
\newcommand{\ps}{\sc \small PS}
\newcommand{\SU}{{\rm SU}_2}
\newcommand{\su}{\mathfrak {su}_2}
\newcommand{\LSU}{\Lambda {\rm SU}_2}
\newcommand{\SL}{{\rm SL}_2 \mathbb C}
\newcommand{\LSUP}{\Lambda^+ {\rm SU}_2}
\newcommand{\LSUN}{\Lambda^- {\rm SU}_2}
\newcommand{\LSUPN}{\Lambda^+_{*} {\rm SU}_2}
\newcommand{\LSUNN}{\Lambda^-_{*} {\rm SU}_2}
\newcommand{\LSUPM}{\Lambda^{\pm} {\rm SU}_2}
\newcommand{\id}{\operatorname{Id}}
\newcommand{\ad}{\operatorname{Ad}}
\newcommand{\di}{\operatorname{diag}}
\newcommand{\etn}{\eta_n}
\newcommand{\etm}{\eta_m}
\renewcommand{\l}{\lambda}
\newcommand{\E}{\mathbb E}
\newcommand{\D}{\mathbb D}
\newcommand{\eq}[1]{\begin{align*}#1 \end{align*}}
\begin{document}
\title{A construction method for discrete constant negative Gaussian curvature surfaces}
 \author[S.-P.~Kobayashi]{Shimpei Kobayashi}
 \address{Department of Mathematics, Hokkaido University, 
 Sapporo, 060-0810, Japan}
 \email{shimpei@math.sci.hokudai.ac.jp}

\begin{abstract}
 This article is an application of the author's paper \cite{DKDPW} 
 about a construction method 
 for discrete constant negative Gaussian curvature surfaces, the nonlinear d'Alembert 
 formula. The heart of this formula is the Birkhoff decomposition, and 
 we give a simple algorithm for the Birkhoff decomposition 
 in Lemma \ref{lem:factorize}. As an application, we 
 draw figures of discrete constant negative Gaussian curvature 
 surfaces given by this method (Figures \ref{fig:1} and \ref{fig:2}).
\end{abstract}

\keywords{Discrete differential geometry; pseudospherical surface; 
 loop groups; integrable systems}
\maketitle

\section{Introduction}
 The study of smooth constant negative Gaussian curvature surfaces ({\ps} surfaces\footnote{A constant negative Gaussian curvature surface is sometimes called a pseudospherical surface, thus we use ``{\ps}'' for the shortened name.} in 
 this article) is a classical subject of differential geometry. 
 It is known that the Gauss-Codazzi equations (nonlinear partial differential 
 equations) for a {\ps} surface become a famous integrable system, {\it sine-Gordon} 
 equation:
 \eq{\partial_{y} \partial_x u- \sin u =0.}
 One of the prominent features of integrable systems is that they can be obtained by 
 compatibility conditions for certain linear partial differential equations, the 
 so-called {\it Lax pairs}. Moreover, the Lax pair contains an additional parameter, 
 the {\it spectral parameter}, and it is a fundamental tool to 
 study integrable systems.
 On a {\ps} surface, 
 the spectral parameter induces a family of {\ps} surfaces, which will be called 
 the {\it associated family}, and the Lax pair is a family of moving frames 
 (Darboux frames) and it will be called the {\it extended frame} of 
 a {\ps} surface. The extended frame can be thought as an element of 
 the set of maps from the unit circle $S^1$ in the complex plane into a Lie group, the {\it loop group}, see Appendix for the definition. 

 In \cite{Krichever, Toda}, it was shown that loop group decompositions 
 (Birkhoff decompositions, see Theorem \ref{thm:Birkhoff}) 
 of the extended frame $F$ of a {\ps} surface 
 induced a pair of $1$-forms $(\xi_+, \xi_-)$, 
 that is, $F=F_+ F_-=G_- G_+$ with $\xi_+ = F_+^{-1} d F_+$ and 
 $\xi_- = G_-^{-1} d G_-$. Then it was proved that 
 $\xi_+$ and $\xi_-$ depended only on $x$ and $y$, respectively.
 Conversely it was shown that solving the pair of ordinary differential
 equations $d F_+ = F_+ \xi_+$ and $d G_+ = G_+ \xi_-$ and using 
 the loop group decomposition, the extended frame could be recovered.
 This construction is called the {\it nonlinear d'Alembert formula} for 
 {\ps} surfaces. 
 
 On the one hand a discrete analogue of smooth {\ps} surfaces 
 was defined in \cite{BP} and 
 the nonlinear d'Alembert formula for discrete {\ps} surfaces 
 was recently shown in \cite{DKDPW}.
 In this article we first review basic results for smooth/discrete 
 {\ps} surfaces and the nonlinear d'Alembert formula for smooth/discrete {\ps}
 surfaces according to \cite{DIS, BP, DKDPW}. 
 The heart of the formula is the Birkhoff decomposition. 
 We next give a simple algorithm (Lemma \ref{lem:factorize}) 
 for the Birkhoff decomposition. As an application, we finally
 draw figures of discrete pseudospherical surfaces given by 
 this method (Figures \ref{fig:1} and \ref{fig:2}).
\section{Preliminaries}\label{sc:Pre}
 We briefly recall basic notation and results about 
 smooth and discrete {\ps} surfaces in the Euclidean three space
 $\E^3$, that is $\R^3$ with the standard inner product $\langle\cdot, \cdot\rangle$, see for examples \cite{MS, BP, Toda, DIS, Krichever}.
 Moreover, we recall the nonlinear d'Alembert formula for discrete {\ps} surfaces 
 \cite{DKDPW}. 
 
\subsection{Pseudospherical surfaces}\label{subsc:pssurf}
 We first identify $\E^3$ with the Lie algebra 
 of the special unitary group $\SU$, which will be denoted by $\su$: 
\begin{equation}\label{eq:identification}
  {}^t(x, y, z) \in \E^3 \longleftrightarrow
 \frac{i}{2}x \sigma_1 - \frac{i}{2}  y \sigma_2 + \frac{i}{2} 
 z \sigma_3 \in \su, 
\end{equation}
 where $\sigma_j \;(j=1, 2, 3)$ are the Pauli matrices as follows:
\eq{
 \sigma_1=
\begin{pmatrix}
0 & 1 \\
1 & 0
\end{pmatrix},\;\;
\sigma_2=
\begin{pmatrix}
0 & -i \\
i & 0
\end{pmatrix}
\;\;\mbox{and}\;\;
 \sigma_3 = 
\begin{pmatrix}
1 & 0 \\
0 & -1
\end{pmatrix}.
}
 Note that the inner product of $\E^3$ can be computed as  
 $\langle \mathbf{x}, \mathbf{y} \rangle = -2 \operatorname{trace} (XY)$, where 
 $\mathbf{x}, \mathbf{y} \in \E^3$ and $X,Y \in \mathfrak su_2$ are the 
 corresponding matrices in \eqref{eq:identification}.
 Let $f$ be a {\ps} surface in $\E^3$ with Gaussian curvature $K =-1$. 
 It is known that there exist the {\it Chebyshev coordinates} 
 $(x, y) \in \R^2$ for $f$, that is, they are
 asymptotic coordinates normalized by $|f_x| = |f_y| =1$. 
 Here the subscripts $x$ and $y$ denote the 
 $x$- and $y$-derivatives $\partial_x$ and $\partial_y$, 
 respectively.  Then the first and second fundamental forms for $f$ can be
 computed as 
\eq{
 {\rm I}  =dx^2 + 2 \cos u\; dx dy + dy^2, \;\;\;
 {\rm I\!I}  = 2 \sin u \; dx dy,
}
 where $0< u< \pi/2$ is the angle between two asymptotic lines.
 Let 
\eq{
e_1 = \frac{1}{2} \sec (u/2) (f_x +f_y), \;\; e_2 = \frac{1}{2} \csc (u/2) (f_x -f_y)
 \;\;\mbox{and}\;\; e_3 = e_1 \times e_3} 
 be the Darboux frame rotating on the tangent plane clockwise angle $u$. 
 Note that it is easy to see that $\{e_1, e_2, e_3\}$ is an orthonormal basis of 
 $\E^3$.
 Under the identification \eqref{eq:identification}, 
 $\{ - \frac{i}{2} \sigma_1, - \frac{i}{2} \sigma_2, 
 - \frac{i}{2} \sigma_3\}$ is an orthonormal basis of $\E^3$, and for 
 a given $F \in \SU$ and $x \in \su$, ${\rm Ad} (F) (x) (:= F x F^{-1})$ denotes the rotation of 
 $x$. Thus there exists a $\widetilde F$ taking values in $\SU$ such that
\begin{equation}\label{eq:Darbouxframe}
 e_1 =  - \frac{i}{2} \widetilde F\sigma_1\widetilde F^{-1}, \;\;
 e_2 =  -\frac{i}{2} \widetilde F\sigma_2\widetilde F^{-1} \;\;
 \mbox{and}\;\;
 e_3 = - \frac{i}{2} \widetilde F\sigma_3\widetilde F^{-1}.
\end{equation}
 Without loss of generality, at some base point 
 $(x_*, y_*) \in \R^2$, we have $\tilde F(x_*, y_*)=\id$.
 Then there exists a family of frames $F$ parametrized by $\l \in \R_+:=
\{ r \in \R \;|\; r>0\}$ 
 satisfying  the following system of partial differential equations, see \cite{DIS} in 
 detail:
\begin{equation}\label{eq:Laxpair}
 F_x = F U \;\;\mbox{and} \;\;F_y = F V,
\end{equation}
 where 
\begin{equation}\label{eq:movingUV}
 U= 
\frac{i}{2}
 \begin{pmatrix}
- u_x   &  \l \\
  \l    & u_x
 \end{pmatrix}, \;\;
 V= -\frac{i}{2} 
 \begin{pmatrix}
 0  &  \l^{-1} e^{i u} \\
 \l^{-1} e^{-i u}     & 0
 \end{pmatrix}.
\end{equation}
 The parameter $\lambda \in \R_{+}$ will be called the {\it spectral parameter}.
 We choose $F$ such that 
\eq{
F|_{\l=1} = \widetilde F \;\;\mbox{and}\;\;
F|_{(x_*, y_*)}  = \id.
}
 The compatibility condition of the system in \eqref{eq:Laxpair}, 
 that is $U_y -V_x +[V, U] =0$, 
 becomes a version of the sine-Gordon  equation:
\begin{equation}\label{eq:sine-Gordon}
 u_{x y} - \sin u =0.
\end{equation}
 It turns out that the sine-Gordon equation is the Gauss-Codazzi equations for 
 {\ps} surfaces. Thus from the fundamental theorem of surface theory
 there exists a family of {\ps} surfaces 
 parametrized by the spectral parameter $\l \in \R_+$.
 Then the family of frames $F$ will be called the {\it extended frame} for $f$. 
 
 From the extended frame $F$, a family of {\ps} surfaces $f^{\lambda},\;
 (\l \in \R_+)$ 
 is given by the so-called {\it Sym formula}, \cite{Sym}:
 \begin{equation}\label{eq:Symformula}
 f^{\lambda} = \lambda \left.  \frac{\partial F}{\partial \lambda} 
 F^{-1}\right|_{\l  \in \R_{+}}.
 \end{equation}
 The immersion $f^{\l}|_{\l=1}$ is the original {\ps} surface $f$
 up to rigid motion. The one-parameter family $\{f^{\l}\}_{\l \in \R_{+}}$ 
 will be called the {\it associated family} of $f$.

\subsection{Nonlinear d'Alembert formula}
 Firstly, we note that the extended frame $F$ of a {\ps} surface $f$ is an 
 element of the {\it loop group} for $\SU$, that is, it is a set of 
 smooth maps from $S^1$ into $\SU$, see Appendix 
 for the definition. 
 In fact by \eqref{eq:movingUV} the extended frame is defined on 
 $\mathbb C^{\times} = \mathbb C \setminus \{0\}$ and it can be 
 thought as an element in the loop group of $\SU$.
 Then the loop group becomes a Banach Lie group with 
 suitable topology, which is 
 an infinite-dimensional Lie group and thus it will be called the loop group. 
 Then the Birkhoff decomposition of the loop group 
 is fundamental, 
 which will be now explained. The loop group of $\SU$ will be denoted by $\LSU$ and 
 we consider two subgroups $\LSUP$ and $\LSUN$ of $\LSU$ as sets of 
 maps which can be extended inside 
 the unit disk and outside the unit disk, respectively. 
 In other words, maps $F \in \LSU$, $F_+ \in \LSUP$ and $F_- \in 
 \LSUN$ have the following Fourier expansions:
\eq{
  F = \sum_{j =- \infty}^{\infty} F_j \l^{j}, \;\; 
  F_+ = \sum_{j =0}^{\infty} F^+_j \l^{j} \;\;\;\mbox{and} \;\;\;
  F_- = \sum_{j =- \infty}^{0} F^-_j \l^{j}.
}
 Then we consider the following problem: for a given map $F \in \LSU$, 
 does there exist $F_{\pm}$ or $G_{\pm}$ taking values in $\LSUPM$ such that 
\eq{
 F = F_+ F_- \;\;\mbox{or}\;\;  F= G_- G_+
}
 holds? The Birkhoff decomposition theorem assures that this decomposition 
 always holds in case of the loop group of $\SU$, 
 see Theorem \ref{thm:Birkhoff} in detail.
 By using the Birkhoff decomposition theorem, we give a construction method for {\ps} 
 surfaces, the so-called {\it the nonlinear d'Alembert formula}.

 From now on, for simplicity, we assume that the base point is $(x_*, y_*) =(0, 0)$ and 
 the extended frame $F$ at the base point is identity:
\eq{
 F(0, 0, \lambda) = \id.
}
 The nonlinear d'Alembert formula for smooth {\ps} surfaces
 is summarized as follows, \cite{Krichever, Toda, DIS}. 
\begin{Theorem}[\cite{Toda, DIS}]\label{thm:dAlembert}
 Let $F$ be the extended frame for a {\ps} surface $f$ in $\E^3$.
 Moreover, let $F= F_{+} F_{-} = G_{-} G_{+}$ be the Birkhoff 
 decompositions given in Theorem \ref{thm:Birkhoff}, respectively.
 Then $F_{+}$ and $G_{-}$ do not depend on $y$ and $x$, respectively, and 
 the Maurer-Cartan forms of $F_{+}$ and $G_{-}$ are given as follows$:$
\begin{equation}\label{eq:potential} 
\left\{
\begin{array}{l}
 \xi_+ = \displaystyle F_{+}^{-1} d F_{+} 
 =\frac{i}{2} \l 
\begin{pmatrix}
 0 & e^{-i \alpha(x)} \\
 e^{i \alpha(x)} & 0
\end{pmatrix} dx, \\[0.3cm]
\xi_- = G_{-}^{-1} d G_{-} 
 =\displaystyle
 -\frac{i}{2}\l^{-1} 
\begin{pmatrix}
 0 & e^{i \beta(y)} \\
 e^{-i \beta(y)} & 0
 \end{pmatrix}dy,
\end{array}
\right.
\end{equation}
 where, using the angle function $u(x, y)$, $\alpha$ and $\beta$ are given by
\eq{
 \alpha(x) = u(x, 0)-u(0, 0)\;\;\mbox{and}\;\;\beta(y) 
 = u(0, y).
}
 Conversely, let $\xi_{\pm}$ be a pair of $1$-forms defined in \eqref{eq:potential}
 with functions $\alpha(x)$ and $\beta(y)$ satisfying $\alpha(0)=0$.
 Moreover, let $F_{+}$ and $G_{-}$ be solutions of 
 the pair of the following ordinary differential equations$:$
 \begin{equation*}
\left\{
\begin{array}{l}
 d F_{+} = F_{+} \xi_{+}, \\
 d G_{-} = G_{-} \xi_{-},
\end{array}
\right.
\end{equation*}
 with $F_{+} (x =0, \l) = G_{-}(y =0, \l) = \id$.
 Moreover let $D = \di (e^{-\frac{i}{2} \alpha}, e^{\frac{i}{2} \alpha})$ and 
 decompose $(F_{+}D)^{-1} G_{-}$ by the Birkhoff decomposition 
 in Theorem \ref{thm:Birkhoff}$:$
\eq{
(F_{+}D)^{-1} G_{-} = V_{-} V_{+}^{-1},
}
 where $V_{-} \in \LSUNN$ and $V_{+} \in \LSUP$.
 Then $F = G_{-} V_{+} = F_{+} DV_{-}$ is the extended frame of some {\ps} surface in $\E^3$.
\end{Theorem}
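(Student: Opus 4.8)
The plan is to prove the two implications separately, in both directions exploiting that in \eqref{eq:movingUV} the $x$-connection $U$ has $\l$-degrees in $\{0,1\}$ while the $y$-connection $V$ is purely of degree $-1$; splitting a loop-valued quantity by its $\l$-degree is the basic tool. Throughout I use the normalizations of the Birkhoff decomposition in Theorem~\ref{thm:Birkhoff}, namely $F_+|_{\l=0}=\id$ (so $F_{+}\in\LSUPN$) and $G_-|_{\l=\infty}=\id$ (so $G_{-}\in\LSUNN$). \emph{Forward direction.} I first show $F_+$ is independent of $y$ and $G_-$ of $x$. Differentiating $F=F_+F_-$ and using $F^{-1}F_y=V$ gives
\[
 F_+^{-1}(F_+)_y = F_- V F_-^{-1} - (F_-)_y F_-^{-1}.
\]
The left-hand side has only nonnegative powers of $\l$ since $F_+\in\LSUP$, while the right-hand side has only nonpositive powers since $V$ has degree $-1$ and $F_-\in\LSUN$; hence $F_+^{-1}(F_+)_y$ is independent of $\l$, and evaluating at $\l=0$ with $F_+|_{\l=0}=\id$ shows it vanishes. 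The same count applied to $F=G_-G_+$ with $F^{-1}F_x=U$ (degrees $\{0,1\}$) and $G_-|_{\l=\infty}=\id$ gives $(G_-)_x=0$.

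To identify $\xi_\pm$ I restrict to the coordinate axes. Since $F_+$ is $y$-independent it suffices to work along $y=0$; there the diagonal matrix $D=\di(e^{-\frac{i}{2}\alpha},e^{\frac{i}{2}\alpha})$ with $\alpha(x)=u(x,0)-u(0,0)$ is chosen exactly so that the diagonal part $-\frac{i}{2}u_x\sigma_3$ of $U$ is cancelled by $D^{-1}D_x$, whence $\widetilde F:=FD^{-1}$ satisfies $\widetilde F^{-1}\widetilde F_x=\frac{i}{2}\l\begin{pmatrix}0 & e^{-i\alpha}\\ e^{i\alpha} & 0\end{pmatrix}$. This connection has only positive powers of $\l$, so the solution $\widetilde F$ with $\widetilde F|_{\l=0}=\id$ lies in $\LSUPN$; by uniqueness of the Birkhoff decomposition $\widetilde F=F_+$ along $y=0$ (and $F_-=D$ there), giving the stated $\xi_+$. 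Along $x=0$ no gauge is needed: $V|_{x=0}$ is already the off-diagonal degree-$(-1)$ form with $\beta(y)=u(0,y)$, and the solution $F(0,y,\cdot)$ of $F_y=FV$ lies in $\LSUNN$, so it coincides with $G_-$ and produces $\xi_-$.

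\emph{Converse.} The two formulas for $F$ agree because $G_-V_+=F_+DV_-$ is exactly the decomposition $(F_+D)^{-1}G_-=V_-V_+^{-1}$, so $F$ is well defined, and at $(0,0)$ all factors are $\id$, giving $F|_{(0,0)}=\id$. The key is to bound the $\l$-degree of $F^{-1}dF$ from both sides: from $F=G_-V_+$,
\[
 F^{-1}dF = V_+^{-1}\xi_- V_+ + V_+^{-1}dV_+ ,
\]
which has degree $\ge -1$, while from $F=F_+DV_-$,
\[
 F^{-1}dF = V_-^{-1}(D^{-1}\xi_+ D)V_- + V_-^{-1}(D^{-1}dD)V_- + V_-^{-1}dV_- ,
\]
which has degree $\le 1$. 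Separating $dx$- and $dy$-parts: the $dy$-part equals $V_-^{-1}(V_-)_y$ (as $\xi_+,dD$ carry no $dy$), of degree $\le 0$, and its degree-$0$ term vanishes since $V_-|_{\l=\infty}=\id$, so it is purely of degree $-1$; the $dx$-part has degree in $\{0,1\}$, and the gauge identity $D^{-1}\frac{i}{2}\begin{pmatrix}0&e^{-i\alpha}\\e^{i\alpha}&0\end{pmatrix}D=\frac{i}{2}\sigma_1$ fixes its $\l$-linear coefficient to be $\frac{i}{2}\l\sigma_1$. Using that all factors lie in the twisted $\SU$ loop group (even powers of $\l$ diagonal, odd powers off-diagonal) one concludes $F^{-1}dF=U\,dx+V\,dy$ with $U=\frac{i}{2}\begin{pmatrix}-p & \l\\ \l & p\end{pmatrix}$ and $V=-\frac{i}{2}\l^{-1}\begin{pmatrix}0 & e^{iu}\\ e^{-iu} & 0\end{pmatrix}$ for some real functions $p(x,y)$ and $u(x,y)$.

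Finally, since $F^{-1}dF$ is a logarithmic derivative it is automatically flat, and I expect this to be the decisive step. Expanding $U_y-V_x+[V,U]=0$ by $\l$-degree, the degree-$(-1)$ equation forces $p=u_x$ (linking the two a priori unrelated functions in $U$ and $V$), and the degree-$0$ equation then gives $u_{xy}=\sin u$. Hence $U,V$ are exactly \eqref{eq:movingUV} and $u$ solves \eqref{eq:sine-Gordon}, so $F$ is the extended frame of a {\ps} surface, recovered by the Sym formula \eqref{eq:Symformula}. The main obstacle is precisely this last consistency: the degree bounds and twisting only deliver the matrix shape of $U$ and $V$ with independent functions $p$ and $u$, and it is the flatness of $F^{-1}dF$ — rather than any direct computation of the loop factors — that identifies $p=u_x$ and produces the sine-Gordon equation.
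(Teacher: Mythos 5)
The paper does not prove this theorem: it is quoted from the references \cite{Toda, DIS}, so there is no in-paper proof to compare against. Your argument is the standard loop-group proof (degree counting in $\l$ on the two factorizations, uniqueness of the Birkhoff decomposition along the axes, and flatness of the Maurer--Cartan form in the converse), which is essentially the route taken in those references, and the forward direction as you present it is complete and correct: the computation $F_+^{-1}(F_+)_y = F_-VF_-^{-1}-(F_-)_yF_-^{-1}$ with the normalization $F_+|_{\l=0}=\id$, and the gauge $\widetilde F=FD^{-1}$ along $y=0$, are exactly right.

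One step in the converse needs more care. For the $dy$-part you derive ``purely of degree $-1$, off-diagonal, anti-Hermitian'' from $V_-^{-1}(V_-)_y$ together with the twisting; but that only yields a coefficient of the form $\bigl(\begin{smallmatrix}0 & w\\ -\bar w & 0\end{smallmatrix}\bigr)$ with $w$ of \emph{a priori arbitrary} modulus, whereas the Lax form \eqref{eq:movingUV} requires $|w|=\tfrac12$ (this is the Chebyshev normalization $|f_y|=1$, without which the Sym formula need not produce a $K=-1$ surface in these coordinates). The fix is the same device you already used for the $dx$-part: compute the $\l^{-1}$ coefficient from the \emph{other} factorization $F=G_-V_+$, where it equals $\operatorname{Ad}\bigl(V_+|_{\l=0}\bigr)$ applied to the coefficient of $\xi_-$; since $V_+|_{\l=0}$ is a diagonal element of $\SU$ (by the twisting), this conjugation preserves the modulus $\tfrac12$ and only shifts the phase, giving $-\tfrac{i}{2}\l^{-1}\bigl(\begin{smallmatrix}0 & e^{iu}\\ e^{-iu} & 0\end{smallmatrix}\bigr)$ with $u$ real. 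With that repaired, your flatness computation (degree $-1$ forcing $p=u_x$, degree $0$ giving sine-Gordon) goes through; the only remaining standard caveat is that $F$ is the extended frame of a possibly singular {\ps} surface where $\sin u=0$.
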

\begin{Definition}
 The pair of $1$-forms $(\xi_+, \xi_-)$ in \eqref{eq:potential}
 will be called the {\it pair of normalized potentials}.
\end{Definition}

 \begin{Definition}
 In \cite{DIS}, it was shown that the extended frames of {\ps} surfaces 
 can be also constructed from the following pair of $1$-forms:
 \begin{equation}\label{eq:genepot}
\eta^x = \sum_{j = - \infty}^{1} \eta^x_{j} \lambda^{j} dx
 \;\;\mbox{and}\;\;
\eta^y = \sum_{j = - 1}^{\infty} \eta^y_{ j} \lambda^{j} dy, 
\end{equation}
 where $\eta_j^x$ and  $\eta_j^y$ take values in $\su$, 
 and each entry of $\eta_j^x$ (resp. $\eta_j^y$) is smooth on $x$ (resp. $y$), 
 and $\det \eta^x_1 \neq 0,  \det \eta^y_{-1} \neq 0$. Moreover $\eta_j^x$ 
 and $\eta_j^y$ are diagonal (resp. off-diagonal) if $j$ is even (resp. odd).
 This pair of $1$-forms $(\eta^x, \eta^y)$ is a generalization 
 of the normalized potentials $(\xi_+, \xi_-)$ in \eqref{eq:potential} 
 and will be called the {\it pair of generalized potentials}, see also \cite{BIK}.
\end{Definition}

\subsection{Discrete pseudospherical surfaces}
 Discrete {\ps} surfaces were first defined in \cite{BP}. 
 Instead of the smooth coordinates 
 $(x, y) \in \R^2$, 
 we use the quadrilateral lattice $(n, m) \in \Z^2$, that is,
 all functions depend on the lattice $(n, m) \in \Z^2$.
 The subscripts $1$ and $2$ (resp. $\bar 1$ and $\bar 2$)
 denote the forward (resp. backward) lattice points with 
 respect to $n$ and $m$: For a map $s(n, m)$ of the lattice 
 $(n, m) \in \Z^2$, we define $s_1, s_2, s_{\bar 1}$ and $s_{\bar 2}$  by
\eq{
 s_{1} = s(n+ 1, m), \; s_{\bar 1} = s(n- 1, m),\;
 s_{2} = s(n, m+ 1) \;\;\mbox{and}\;\; s_{\bar 2} = s(n, m-1).
}
 A discrete {\ps} surface $f : \Z^2 \to \E^3$ 
 was defined by the following two conditions:
 \begin{enumerate}
\item For each point $f \in \E^3$, there is a plane $P$ such that 
\eq{
 f, \; f_1, \; f_{\bar 1}, \; f_2, \; f_{\bar 2} \in P.
} 
\item The length of the opposite edge of an elementary quadrilateral are equal:
\eq{
 |f_1 - f| = |f_{1 2} -f_2| = a(n) \neq 0, \;\;
 |f_2 - f| = |f_{1 2} -f_1| = b(m) \neq 0.
} 
\end{enumerate}
 Then the {\it discrete extended frame} $F$, which takes values in $\LSU$, 
 of a discrete {\ps} surface $f$
 can be defined by  the following partial difference system, 
 see \cite[Section 3.2]{BP:surv} and 
 \cite{BP}:
\begin{equation}\label{eq:DLaxpair}
 F_1 = F U \;\;\mbox{and} \;\;F_2 = F V,
\end{equation}
 where 
\begin{equation}\label{eq:DmovingUV}
\left\{
\begin{array}{l}
\displaystyle 
 U = 
\frac{1}{\Delta_{+}}
 \begin{pmatrix}
 e^{-\frac{i}{2} (u_1 -u)}  &  \frac{i}{2}p \lambda  \\
 \frac{i}{2} p \lambda     &  e^{\frac{i}{2}(u_1 -u)}
 \end{pmatrix},  \;\;
 \\[0.2cm]

\displaystyle 
 V= \frac{1}{\Delta_{-}} 
 \begin{pmatrix}
 1  &  -\frac{i}{2} q e^{\frac{i}{2}(u_2 + u)} \l^{-1}\\
- \frac{i}{2} q e^{-\frac{i}{2} (u_2 + u)} \l^{-1}& 1
 \end{pmatrix},
\end{array}
\right.
\end{equation}
 with $\Delta_+ = \sqrt{1 + (p/2)^2 \l^2}$ and 
 $\Delta_- = \sqrt{1 + (q/2)^2 \l^{-2}}$. Here 
 $u$ is a real function depending on both $n$ and $m$, and 
 $p\neq 0$ and $q\neq 0$ are real functions depending only on $n$ and $m$, 
 respectively: 
\eq{
 u = u(n, m), \;\;
 p = p(n) \;\;\mbox{and}\;\;
 q = q(m).
 }
 The compatibility condition of the system in \eqref{eq:DLaxpair},
 that is $V U_2 = UV_1$, 
 gives the so-called {\it discrete sine-Gordon equation}:
\begin{equation}\label{eq:dsineGordon}
 \sin \left( \frac{u_{12}-u_1-u_2+u}{4}\right)
 = \frac{p q}{4} \sin \left( \frac{u_{12}+u_1 + u_2+u}{4}\right).
\end{equation}
 The equation \eqref{eq:dsineGordon} was first found by Hirota in \cite{Hirota}
 and also called the {\it Hirota equation}.
\begin{Remark}\label{rm:length}
 Strictly speaking, the lengths of the edges for a discrete {\ps} surface should be small
 (less than $1$).
 If the length is big (greater than or equal to $1$), 
 then the compatibility condition $VU_2 = UV_1$ gives 
 a discrete analogue of mKdV equation, see \cite{IKMO} in detail.  
 Since this restriction is fundamental for the discrete nonlinear d'Alembert formula, 
 we assume the conditions in \eqref{eq:conditions}.
\end{Remark}
 Then the discrete {\ps} surface $f$ can be given by the so-called Sym formula, 
 \cite{BP}:
 \begin{equation}\label{eq:dSymformula}
 f^{\lambda} = \lambda \left.  \frac{\partial F}{\partial \lambda} 
 F^{-1}\right|_{\l  \in \R_{+}}.
 \end{equation}
 The original discrete {\ps} surface $f$ and 
 $f^{\lambda}|_{\lambda =1}$ are the same surface up to rigid motion. 
 It is easy to see that the map $f^{\lambda}$ 
 defined in \eqref{eq:dSymformula} satisfies two properties of a 
 discrete {\ps} surface and $\{f^{\l}\}_{\lambda \in \R_{+}}$ gives 
 a family of discrete {\ps} surfaces, see \cite[Theorem 3]{BP:surv}.
\subsection{Nonlinear d'Alembert formula 
 for discrete {\ps} surfaces}\label{subsc:DDAlembert}
 In this subsection we assume that the base point is $(n_*, m_*)=(0, 0)$
 and the discrete extended frame $F$ at the base point is identity:
\eq{
 F(0, 0, \lambda) = \id.
}
 Moreover, we also assume that the functions $p$ and $q$ in \eqref{eq:DmovingUV}
 satisfy the inequalities,
\begin{equation}\label{eq:conditions}
 0 < \left|\frac{p}{2}\right|<1 \;\;\mbox{and}\;\;0<\left|\frac{q}{2}\right|<1,
\end{equation}
 see Remark \ref{rm:length}.
 Then the discrete nonlinear d'Alembert formula can be summarized as follows.

 \begin{Theorem}[\cite{DKDPW}]\label{thm:potential}
 Let $f$ be a discrete {\ps} surface and $F$ the corresponding 
 discrete extended frame. Decompose $F$ according 
 to the Birkhoff decomposition in Theorem \ref{thm:Birkhoff}$:$
\eq{
 F= F_{+} F_{-} = G_{-} G_{+},
}
 where $F_{+} \in \LSUPN, F_{-} \in \LSUN, G_- \in \LSUNN$ 
 and $G_+ \in \LSUP$.
 Then $F_{+}$ and $G_-$ do not depend on $m \in \Z$ and 
 $n \in \Z$, 
 respectively, and the discrete Maurer-Cartan forms 
 of $F_+$ and $G_-$ are given 
 as follows:
\begin{equation}\label{eq:discretepotentials}
\left\{
\begin{array}{l}
\displaystyle
 \xi_{+} = F_{+}^{-1} (F_{+})_1 = 
 \frac{1}{ \Delta_{+}}
\begin{pmatrix}
1 & \frac{i}{2} p e^{-i \alpha}\l \\
\frac{i}{2} p e^{i \alpha} \l &1 
\end{pmatrix}, \\[0.3cm]
\displaystyle
 \xi_{-} = G_{-}^{-1} (G_{-})_2 = 
 \frac{1}{ \Delta_-}
\begin{pmatrix}
1 &  -\frac{i}{2} q e^{i \beta}\l^{-1} \\
-\frac{i}{2} q e^{-i \beta}\l^{-1} &1
\end{pmatrix},
\end{array}
\right.
\end{equation}
 where $\Delta_{+} = \sqrt{1 + (p/2)^2 \l^2}$ 
 and $\Delta_{-} = \sqrt{1 + (q/2)^2 \l^{-2}}$, 
 the functions $p$ and $q$ are given in \eqref{eq:DmovingUV},
 and $\alpha$ and $\beta$ are functions of $n \in \Z$ and $m \in \Z$, 
 respectively.
 Moreover using the function $u(n, m)$ in \eqref{eq:DmovingUV}, 
 $\alpha(n) $ and $\beta(m)$ are given by
 \begin{equation}\label{eq:potfunctions}
\left\{
\begin{array}{l}
 \alpha(n) =\frac{1}{2} u(n+1, 0)+\frac{1}{2}u(n, 0)- u(0, 0) , \\[0.1cm]
 \beta(m) = \frac{1}{2}u(0,m+1)+\frac{1}{2}u(0, m).
\end{array}
\right.
 \end{equation}
 Conversely, 
 Let $\xi_{\pm}$ be a pair of matrices defined in  \eqref{eq:discretepotentials} 
 with arbitrary 
 functions $\alpha = \alpha(n),  \beta = \beta(m)$ with $\alpha (0) = 0$ 
 and $p = p(n), q = q(m)$ satisfying the conditions \eqref{eq:conditions}.
 Moreover, let $F_{+} =F_+(n, \l)$ and $G_{-} =G_-(m, \l) $ be the solutions 
 of the ordinary difference equations
\begin{align}\label{eq:solofFG}
 (F_{+})_{1} = F_{+} \xi_{+}\;\;\mbox{and}\;\;
 (G_{-})_{2} = G_{-} \xi_{-},
\end{align}
 with $F_{+}(n =0, \l) =G_{-}(m=0, \l)  = \id$ and set a matrix
 $D= \di (e^{\frac{i}{2} k}, e^{-\frac{i}{2} k}) \in {\rm U}_1$, 
 where $k(0) =0$ and $k(n) = 2 \sum_{j=0}^{n-1}(-1)^{j+n}\alpha(j)$ for $n\geqq 1$.
 Decompose $(F_{+}D)^{-1} G_{-}$ by the Birkhoff decomposition in 
 Theorem \ref{thm:Birkhoff}$:$
\begin{equation}\label{eq:Birkhoffforpot}
 (F_{+}D)^{-1} G_{-}  = V_{-} V_{+}^{-1},
\end{equation}
 where $V_{-} \in \LSUNN, V_{+} \in \LSUP$.
 Then $F = G_{-} V_{+} = F_{+} D V_{-}$ 
 is the discrete extended frame of some discrete {\ps} surface 
 in $\E^3$. Moreover the solution $u = u(n, m)$ of the discrete sine-Gordon 
 for the discrete {\ps} surface satisfies the relations in \eqref{eq:potfunctions}.
\end{Theorem}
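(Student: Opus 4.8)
The plan is to prove both directions by combining the Birkhoff decomposition (Theorem~\ref{thm:Birkhoff}) with an analysis of the Fourier degree in $\l$ of the relevant loops. The structural input is that the two factors of the Lax pair in \eqref{eq:DmovingUV} have opposite $\l$-type: under \eqref{eq:conditions} the scalar $1/\Delta_+$ is holomorphic in a neighborhood of the closed unit disk (its pole sits at $|\l|=1/|p/2|>1$), so $U \in \LSUP$; symmetrically $1/\Delta_-$ is holomorphic near $\l=\infty$, so $V \in \LSUN$. This is exactly the point where the length restriction of Remark~\ref{rm:length} enters.

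For the direct statement I would first establish the claimed independence. Writing the $m$-shift of $F=F_+F_-$ and using $F_2=FV$ gives $F_+^{-1}(F_+)_2 = F_- V (F_-)_2^{-1}$; the left-hand side lies in $\LSUPN$ and the right-hand side in $\LSUN$ (here $V\in\LSUN$ is essential), so by uniqueness of the decomposition $\LSUPN\cap\LSUN=\{\id\}$ both sides equal $\id$ and $F_+$ is independent of $m$. The mirror computation with $F_1=FU$ and $F=G_-G_+$ uses $\LSUNN\cap\LSUP=\{\id\}$ to show $G_-$ is independent of $n$. Next I would compute $\xi_+=F_+^{-1}(F_+)_1 = F_- U (F_-)_1^{-1}$ by matching powers of $\l$. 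Since $\xi_+\in\LSUPN$ we have $\xi_+|_{\l=0}=\id$; writing $F_-|_{\l=0}=\di(e^{i\phi},e^{-i\phi})$ this forces the diagonal recursion $\phi_1-\phi=-\tfrac12(u_1-u)$, which at $m=0$ (allowed, as $\xi_+$ is $m$-independent) sums to $\phi(n,0)=-\tfrac12\bigl(u(n,0)-u(0,0)\bigr)$. The leading $\l^1$ off-diagonal term is then $\tfrac{i}{2}p\,e^{i(\phi+\phi_1)}$, and $\phi+\phi_1=-\alpha$ reproduces $\alpha(n)=\tfrac12 u(n+1,0)+\tfrac12 u(n,0)-u(0,0)$ as in \eqref{eq:potfunctions}; the computation for $\xi_-$ and $\beta$ is analogous.

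For the converse I would solve \eqref{eq:solofFG}; since $\xi_+|_{\l=0}=\id$ and $\xi_-|_{\l=\infty}=\id$, the solutions satisfy $F_+\in\LSUPN$, $G_-\in\LSUNN$ and depend only on $n$, respectively $m$. After the decomposition \eqref{eq:Birkhoffforpot} I set $F=G_- V_+=F_+ D V_-$, the two expressions agreeing by \eqref{eq:Birkhoffforpot}, and must verify that $F$ solves a Lax pair of the shape \eqref{eq:DmovingUV}. The mechanism is to compute each logarithmic derivative two ways. From $F=G_- V_+$, using that $G_-$ is $n$-independent, $F^{-1}F_1=V_+^{-1}(V_+)_1\in\LSUP$; from $F=F_+ D V_-$, $F^{-1}F_1=V_-^{-1}\bigl(D^{-1}\xi_+ D_1\bigr)(V_-)_1$. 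Here $D$ plays its designed role: using $k(n)+k(n+1)=-2\alpha(n)$ one checks directly that $D^{-1}\xi_+ D_1$ has exactly the form of $U$ (the phase $\alpha$ is absorbed and the off-diagonal becomes $\tfrac{i}{2}p\l$) with a function $u$. Symmetrically, $F^{-1}F_2=V_-^{-1}(V_-)_2\in\LSUN$ equals $V_+^{-1}\xi_-(V_+)_2$ and has the form of $V$. The compatibility $VU_2=UV_1$, i.e. \eqref{eq:dsineGordon}, is then automatic from $F_{12}=F_{21}$, the relations \eqref{eq:potfunctions} hold tautologically for the $u$ so defined, and \eqref{eq:dSymformula} delivers the surface.

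The main obstacle is the $\l$-degree control in the converse: the loops $F^{-1}F_1$ and $F^{-1}F_2$ a priori only lie in $\LSUP$ and $\LSUN$, and one must show they truncate to the two-term forms of $U$ (degrees $0,1$) and $V$ (degrees $0,-1$). I would handle this by combining the two expressions above — the membership $F^{-1}F_1\in\LSUP$ coming from the $G_-$ side against the explicit conjugate $V_-^{-1}(D^{-1}\xi_+ D_1)(V_-)_1$ from the $F_+D$ side — and arguing from the normalization $V_-=\id+O(\l^{-1})$ together with the two-term structure of $D^{-1}\xi_+ D_1$ that no positive power beyond $\l^1$ can survive, the unitarity condition on $S^1$ fixing the remaining entries. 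This degree bookkeeping, rather than any single identity, is the technical heart, consistent with the abstract's emphasis on the Birkhoff decomposition.
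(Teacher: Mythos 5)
Your proposal cannot be checked against a proof in this paper, because the paper states Theorem \ref{thm:potential} without proof and imports it from \cite{DKDPW}; that said, the argument you outline --- opposite $\l$-types of $U$ and $V$ under \eqref{eq:conditions}, uniqueness in the Birkhoff decomposition to kill $F_+^{-1}(F_+)_2$ and $G_-^{-1}(G_-)_1$, degree bookkeeping to truncate $\Delta_{\pm}F^{-1}F_{1,2}$ to two Fourier coefficients, and the telescoping identity $k(n)+k(n+1)=-2\alpha(n)$ to absorb the phase into $D$ --- is exactly the mechanism of the proof in that reference, so in substance you have reconstructed it. Two places in your sketch need tightening. First, in the direct part you evaluate $F_-$ at $\l=0$, but $F_-\in\LSUN$ has only nonpositive powers; the diagonal matrix $\di(e^{i\phi},e^{-i\phi})$ you want is the $\l^0$ Fourier coefficient, and the recursion $\phi_1-\phi=-\tfrac12(u_1-u)$ does not follow from the $\l^0$ coefficient of $F_-\hat U(F_-)_1^{-1}$ for general $m$, because negative powers of $F_-$ pair with the $\l^1$ part of $U$ and contribute extra diagonal terms. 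The clean route is to note that along $m=0$ one has $F(n,0,\cdot)=U(0,0)\cdots U(n-1,0)\in\LSUP$, so by uniqueness $F_-(n,0,\cdot)=F(n,0,\l=0)=\di\bigl(e^{-\frac{i}{2}(u(n,0)-u(0,0))},e^{\frac{i}{2}(u(n,0)-u(0,0))}\bigr)$ is constant in $\l$, the cross terms vanish, and $\alpha=-(\phi+\phi_1)$ gives \eqref{eq:potfunctions} directly. Second, in the converse the degree argument correctly yields $\Delta_+F^{-1}F_1=A_0+\frac{i}{2}p\l\sigma_1$ with $A_0$ diagonal and unimodular, and likewise for the $m$-direction, but you still owe the verification that the two diagonal phases so obtained are governed by a \emph{single} real function $u(n,m)$ entering as $u_1-u$ in $U$ and as $u_2+u$ in $V$; this consistency is extracted from entries of the automatic compatibility $VU_2=UV_1$ and is the one genuine step your sketch asserts rather than proves.
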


\begin{Definition}
 The pair of matrices $(\xi_{-}, \xi_{+})$ given in \eqref{eq:discretepotentials}
 will be called the
 {\it pair of  discrete normalized potentials}.
\end{Definition}
 Similar to the smooth case, we generalize the pair of discrete normalized potentials:
\begin{Definition}\label{def:genpot}
 Let $(\xi_{-}, \xi_+)$ be a pair of discrete normalized potentials and let $\etm$ and 
 $\etn$ be
\begin{equation}\label{eq:generalizedpot}
\etn = 
P_-^{l}\xi_+ P_-^{r}, 
\;\;
\etm =
P_+^l \xi_- P_+^r.
\end{equation}
 Here we assume that $P_{\pm}^{\star}$ ($\star = l$ or $r$)
 take values in $\LSUPM$  
 and do not depend on $m$ and $n$, respectively, that is, 
 $P_-^\star = P_-^\star(n, \l)$ and $P_+^\star = P_+^\star(m, \l)$.
 Thus the $\etn$ and $\etm$ do not depend on $m$ and $n$, respectively:
\eq{
 \etn = \etn (n, \l), \;\;
 \etm = \etm (m, \l).
}
 The pair $(\etn, \etm)$ given in \eqref{eq:generalizedpot} will be called 
 the {\it pair of discrete generalized potentials}.
\end{Definition}

\begin{Remark}
 The pair of normalized potentials $(\xi_+, \xi_-)$ and 
 the corresponding pair of discrete generalized potentials $(\etn, \etm)$ 
 in \eqref{eq:generalizedpot} give in general different discrete {\ps} 
 surfaces.
\end{Remark}
\section{Algorithm for Birkhoff decomposition}\label{sc:Algandex}
 In this section, we give a simple algorithm performing the Birkhoff decomposition 
 used in Theorem \ref{thm:potential}.
 
 When one looks at the discrete extended frame $F$ defined 
 in \eqref{eq:DLaxpair}, 
 $F_+$ and $G_-$ defined in \eqref{eq:solofFG}, 
 one notices that they are given by products of two types of matrices:
\begin{equation}\label{eq:Epm}
 e_{+} = 
\frac{1}{\sqrt{1+ |a|^2 \l^{2}}}\begin{pmatrix} e^{i \theta}  & a \l \\ -\bar a\l & e^{-i \theta} \end{pmatrix}, \;
e_{-} = \frac{1}{\sqrt{1+ |b|^2 \l^{-2}}}\begin{pmatrix} e^{i \kappa}  & b \l^{-1} \\ -\bar b\l^{-1} & 
 e^{-i \kappa}\end{pmatrix},
\end{equation}
 where $\theta, \kappa \in \R, a, b \in \C \;\mbox{and}\; |a|, |b|<1$. 
 It is easy to see that $e_{\pm}$ take values in $\LSUPM$, 
 respectively. Two matrices $e_+$ and $e_-$ 
 do not commute in general, however, the following lemma holds.
\begin{Lemma}\label{lem:factorize}
 Let $e_{\pm}$ be matrices in \eqref{eq:Epm}.
 Then there exist matrices $\tilde e_{\pm}$ which take values in $\LSUPM$ 
 such that 
\eq{e_+ e_- = \tilde e_- \tilde e_+} 
 holds. In particular $\tilde e_{\pm}$ can be explicitly computed as follows$:$
\eq{
\tilde e_{+} = \frac{1}{\sqrt{1+ |\tilde a|^2 \l^2}}\begin{pmatrix} e^{i \tilde \theta} 
 & \tilde a \l 
 \\ -\bar {\tilde a}\l & e^{- i \tilde \theta} \end{pmatrix}\; \;\mbox{and}\;\;
\tilde e_{-} = \frac{1}{\sqrt{1+ |\tilde b|^2 \l^{-2}}}\begin{pmatrix} e^{i \tilde \kappa} & \tilde b \l^{-1} 
 \\ -\bar{\tilde b}\l^{-1} & e^{- i \tilde \kappa} \end{pmatrix},} 
 where $\tilde a, \tilde b, \tilde \theta$ and $\tilde k$ are explicitly chosen by 
 the following equations$:$
\eq{
 \tilde a = a e^{-i (\kappa + \tilde \kappa)}, \;
 \tilde b = b e^{i (\theta + \tilde \theta)} \;\;\mbox{and}\;\; 
 \tilde \theta + \tilde \kappa= \theta + \kappa + 2 \arg 
 (1- a \bar{b}e^{-i(\theta + \kappa)}).}
 Note that $\tilde e_{\pm}$ are not unique and 
 one can always choose $\tilde \theta = 0$ or $\tilde \kappa  =0$.
 \end{Lemma}
\begin{proof}
 It is just a consequence of a direct computation of 
 $e_+ e_-$ and $\tilde e_{-} \tilde e_{+}$, respectively.
\end{proof}
 Using Lemma \ref{lem:factorize} iteratively, we obtain the following algorithm 
 for the Birkhoff decomposition.
\begin{Theorem}
 Let $F$ be the discrete extended frame of a {\ps} surface. Moreover let $F_+, G_-$ and $D$ be the matrices defined in \eqref{eq:solofFG}.
 Then the Birkhoff decompositions for $F$ and $(F_+ D)^{-1}G_-$ 
 can be explicitly computed.
\end{Theorem}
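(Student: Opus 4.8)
The plan is to show that every matrix whose Birkhoff decomposition is required in Theorem \ref{thm:potential} is a \emph{finite} product of the elementary factors $e_\pm$ of \eqref{eq:Epm}, so that Lemma \ref{lem:factorize} may be iterated to reorder such a product into canonical (sorted) form, which is exactly a Birkhoff decomposition. First I would record the identifications. Comparing \eqref{eq:DmovingUV} with \eqref{eq:Epm}, the transition matrix $U$ is of type $e_+$ (with $a=\frac{i}{2}p$, $\theta=-\frac12(u_1-u)$) and $V$ is of type $e_-$ (with $b=-\frac{i}{2}qe^{\frac{i}{2}(u_2+u)}$, $\kappa=0$); likewise the normalized potentials $\xi_+,\xi_-$ in \eqref{eq:discretepotentials} are of type $e_+$ and $e_-$, while $D=\di(e^{\frac{i}{2}k},e^{-\frac{i}{2}k})$ is the degenerate case $a=b=0$ that is simultaneously of type $e_+$ and $e_-$. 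Solving the recursions \eqref{eq:DLaxpair} and \eqref{eq:solofFG} along a monotone staircase path from the base point then exhibits $F$, $F_+$ and $G_-$ as finite words in the letters $e_+,e_-$; since $e_\pm^{-1}$ is again of type $e_\pm$, the same holds for $(F_+D)^{-1}G_-=D^{-1}F_+^{-1}G_-$.

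Next I would carry out the sorting. For the decomposition $F=G_-G_+$ and for $(F_+D)^{-1}G_-=V_-V_+^{-1}$ one wants all negative letters to the left, achieved by repeatedly replacing an adjacent pair $e_+e_-$ by $\tilde e_-\tilde e_+$ via Lemma \ref{lem:factorize}; each application slides one $e_-$ one slot leftward, so after finitely many steps (the word for $F$ at $(n,m)$ has only $n+m$ letters) all $e_-$ stand to the left of all $e_+$. For the decomposition $F=F_+F_-$ one instead needs the opposite move $e_-e_+=\hat e_+\hat e_-$; this is not a new fact but follows from Lemma \ref{lem:factorize} by taking inverses, because $(e_+e_-)^{-1}=e_-^{-1}e_+^{-1}$ and $e_\pm^{-1}$ are again of type $e_\pm$. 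Thus the same bubble sort, run in the mirror direction, produces the positive-left ordering. Termination is automatic since each word is finite, and by uniqueness of the Birkhoff decomposition (Theorem \ref{thm:Birkhoff}) the sorted factors coincide with the genuine $F_\pm,G_\pm,V_\pm$.

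The only real care is normalization into the starred subgroups $\LSUNN$ and $\LSUPN$. A sorted product of $e_-$ factors lies in $\LSUN$, but its value at $\l=\infty$ is the diagonal matrix $\di(e^{i\sum\tilde\kappa},e^{-i\sum\tilde\kappa})$, so to land in $\LSUNN$ this constant diagonal must be split off and absorbed into the neighbouring positive factor (a constant diagonal lies in $\LSUP\cap\LSUN$, so the re-bracketing is legitimate). Here the freedom noted at the end of Lemma \ref{lem:factorize}---that one may always take $\tilde\kappa=0$ or $\tilde\theta=0$---is exactly the right tool: choosing $\tilde\kappa=0$ at every swap keeps the accumulating $e_-$ product equal to $\id$ at $\l=\infty$, hence automatically in $\LSUNN$, and dumps all phases onto the $\tilde\theta$ of the $e_+$ side (and symmetrically $\tilde\theta=0$ for the positive-left sort, so that the $e_+$ product is $\id$ at $\l=0$ and lies in $\LSUPN$). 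I expect the main obstacle to be precisely this phase bookkeeping: one must fix a consistent convention throughout the iteration so that the iterated explicit formulas of Lemma \ref{lem:factorize} deliver factors already normalized in the correct starred subgroup, rather than leaving a leftover diagonal to be reshuffled at the end.
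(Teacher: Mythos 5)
Your proposal is correct and follows essentially the same route as the paper: the paper likewise treats $F$, $F_+$, $G_-$ and hence $(F_+D)^{-1}G_-$ as finite words in the elementary letters $e_\pm$ and obtains the Birkhoff factors by iterating Lemma \ref{lem:factorize} as a bubble sort, which is exactly the $B_{\pm,i}$ computation carried out in the example following the theorem. Your handling of the opposite swap via inverses and of the normalization into $\LSUPN$ and $\LSUNN$ using the freedom $\tilde\theta=0$ or $\tilde\kappa=0$ merely supplies details the paper leaves implicit.
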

 
 As an example of the above theorem, we draw figures (figures \ref{fig:1} and \ref{fig:2})
 of discrete {\ps} surfaces of revolution according to the 
 following potential, see also \cite[Section 3]{DKDPW}. 
 Let $\etn$ and $\etm$ be $\etn = \etm^{-1} = A_+ L A_-$   
 with 
\eq{
 A_{\pm} = 
\frac{1}{\Delta_{\pm}} 
\begin{pmatrix} 
1 & \pm \frac{i}{2} q \l^{\pm1} \\
\pm \frac{i}{2} q \l^{\pm1}& 1
\end{pmatrix}\;\;
\mbox{and}\;\;L= \di (e^{ic}, e^{-ic}), 
}
 where $\Delta_{\pm} = \sqrt{1 + (q/2)^2 \l^{\pm 2}}$, $q \;(0< |q/2|<1)$ 
 and $c = \pi \ell^{-1} \;(\ell \in \Z_+)$ are some constants. 
 We note that $(\eta_n, \eta_m)$ is a pair of discrete generalized potentials 
 in Definition \ref{def:genpot}.
 The pair of solutions for $((F_{n})_1, (G_m)_2) = (F_n, G_m)(\eta_n, \eta_m)$ 
 with $F_n(0)=G_m(0) = \id$ is explicitly given by 
\eq{
F_n  = (A_+ L A_-)^n, \;\;\;\mbox{and}\;\;\;
G_m  = (A_+ L A_-)^{-m}, 
}
 respectively. Then the Birkhoff decomposition
\eq{
F_n^{-1} G_m= V_- V_+^{-1}
}
 is given by using Lemma \ref{lem:factorize}.
 In fact, $F_n^{-1} G_m$ can be rephrased as 
\eq{
F_n^{-1} G_m = (A_+ L A_-)^{-(n+m)}= \overbrace{(B_- B_+) (B_-B_+) \cdots (B_-B_+)}^{n+m},
}
 where we set $B_+ = (A_+ L)^{-1}$ and $B_- = A_-^{-1}$. Then by using 
 Lemma \ref{lem:factorize}, there exist $B_{+, 1}$ and $B_{-, 1}$ such 
 that $B_+ B_- = B_{-,1} B_{+,1}$ holds. Thus we can compute $F_n^{-1} G_m$
 as follows:
\eq{
 F_n^{-1} G_m = B_- (B_+B_-) \cdots(B_+ B_-)B_+ =B_- (B_{-, 1} B_{+, 1}) \cdots 
 (B_{-, 1} B_{+, 1}) B_+.
}
 Next, we use recursively Lemma \ref{lem:factorize} for $B_{-, i} B_{+, i}\;\;
  (i=1, 2, \ldots, n+m-1)$, that is, 
 there exist  $B_{+, i+1}$ and $B_{-, i+1}$ such that $B_{+, i} B_{-, i} = B_{-,i+1} B_{+, i+1}$  holds. Finally, $F_n^{-1} G_m$ can be computed as 
\eq{
 F_n^{-1} G_m =(B_- B_{-, 1} \cdots B_{-, n+m-1}) \cdot (B_{+, n+m-1} \cdots B_{+, 1} B_+).
}
 Note that since $B_-$ takes values in $\LSUNN$, 
 all $B_{-, i}$ also take values in $\LSUNN$.
 Thus $V_-= B_- B_{-, 1} \cdots B_{-, n+m-1}$ and $V_+^{-1} = B_{+, n+m-1} \cdots B_{+, 1} B_+$. 
 Then we do not need to compute the diagonal matrix $D$ as in Theorem \ref{thm:potential} 
 for drawing figures, since any $\l$-independent diagonal term goes away in
 Sym formula \eqref{eq:dSymformula}, that is, we can use $F_n  V_- = G_m V_+$ 
 in stead of the discrete extended frame $F$ which is given by 
 $F= F_n  V_- D = F_n  V_- D$.
\begin{figure}
\centering
\includegraphics[scale=0.45]{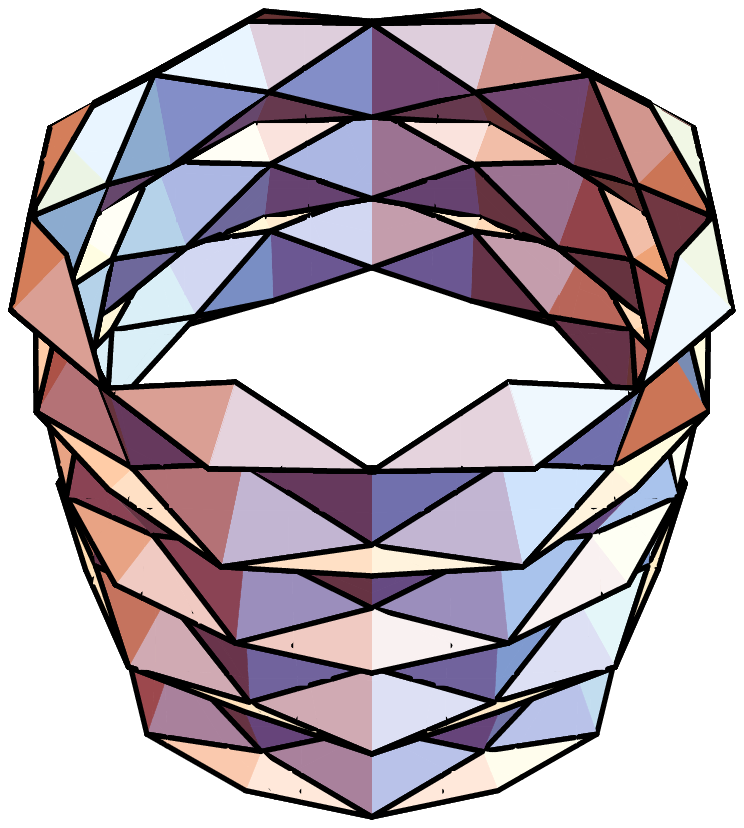}
\includegraphics[scale=0.4]{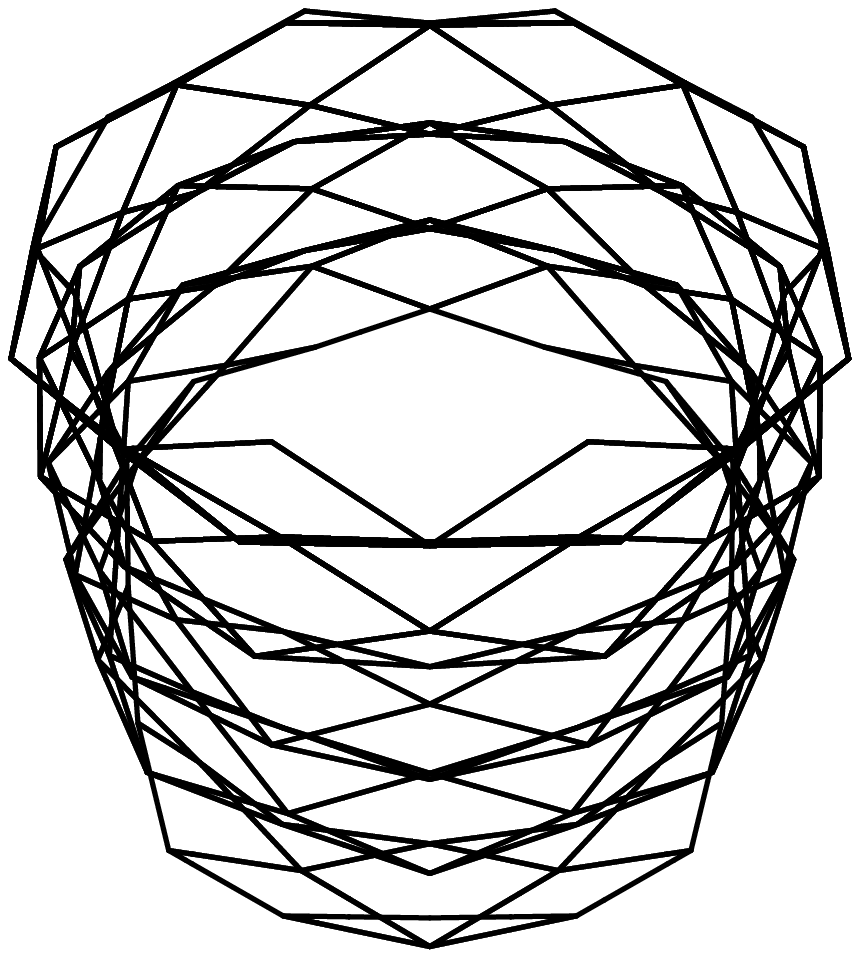}
\caption{A discrete {\ps} surface of revolution with
 the parameters $q =0.8$ and $c = \pi/8$. The right figure is a wired model of the 
 left one. The figures are made by using Wolfram Mathematica 10.}
\label{fig:1}
\end{figure}

\begin{figure}
\centering
\includegraphics[scale=0.35]{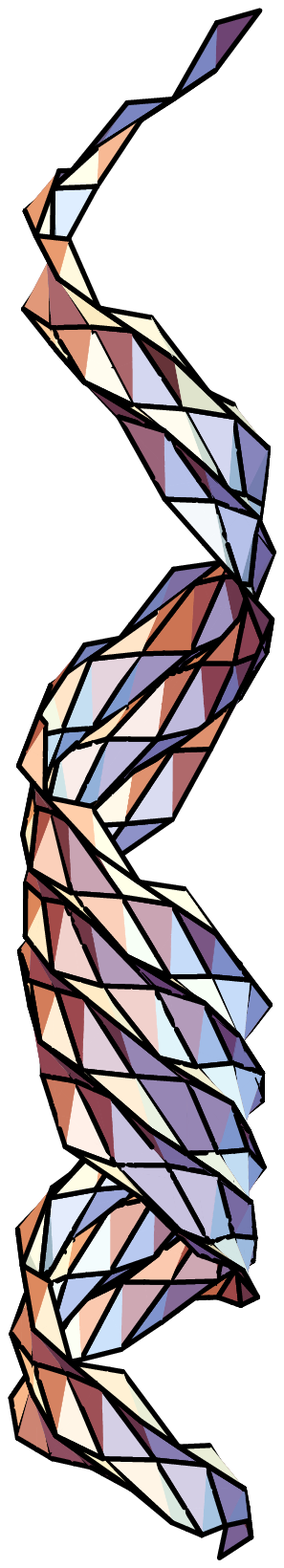}
\includegraphics[scale=0.32]{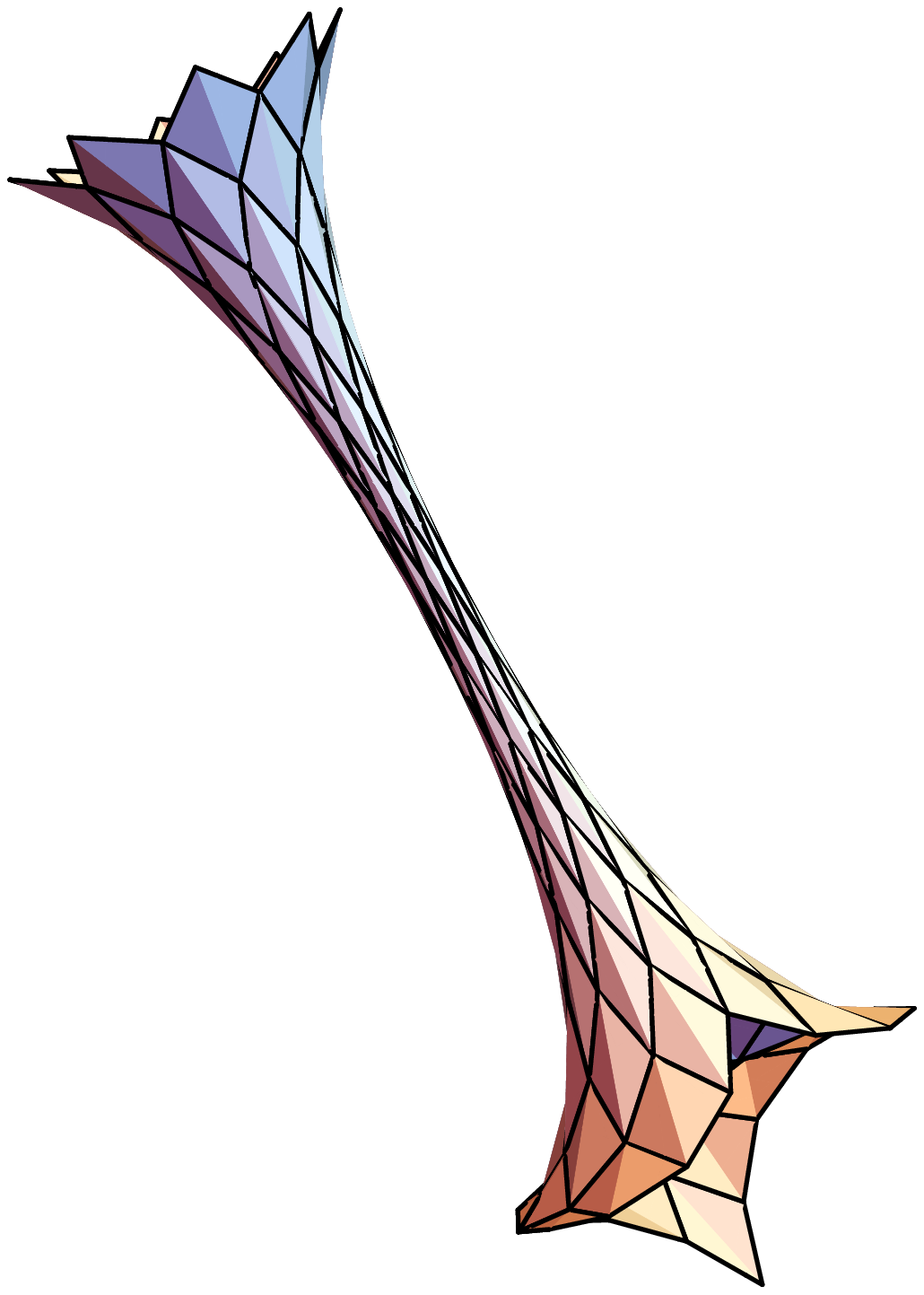}
\caption{
  (Left) a discrete {\ps} surface is given by the same data in Figure \ref{fig:1}
 and the spectral parameter has been chosen as $\l =1.5$ in Sym formula 
 \eqref{eq:Symformula}. The surface has a screw motion symmetry. 
 (Right) a discrete {\ps} surface of revolution with
 the parameters $q =0.4$ and $c = \pi/8$.
}
\label{fig:2}
\end{figure}

\section*{Appendix}\label{sc:loopgroups}
 In this appendix we give a definition of the loop group of $\SU$ and 
 its subgroups $\LSUPM$. Moreover theorem of the Birkhoff 
 decomposition will be stated.

 It is easy to see that 
 $F$ defined in \eqref{eq:Laxpair} together with the condition 
 $F|_{(x_*, y_*)}  = \id$ is an element in the {\it twisted $\SU$-loop group}:
\begin{equation}\label{eq:SUloop}
\LSU := \left\{ g : \R^{\times} \cup S^1 \to \SL\;\Big|\; 
\begin{array}{l}
\mbox{
 $g$ is smooth, $g(\lambda) = \left(\overline{g(\bar \lambda)}^{-1}
 \right)^{T}$}  \\
 \mbox{\;and $\sigma g(\lambda) = g(-\l)$}
 \end{array}\right\},
\end{equation} 
 where $\R^{\times} =\R \setminus \{0\}$, $\sigma X = \ad (\sigma_3) X = \sigma_3 X \sigma_3^{-1}, 
 (X \in \SL)$  is an involution on $\SL$. 
 In order to make the above 
 group a Banach Lie group, we restrict the occurring matrix coefficients to 
 the Wiener algebra $\mathcal A =\{ f(\l)= \sum_{n \in \Z} f_n \l^n : S^1
\rightarrow \mathbb C \;|\;  \sum_{n \in \Z} |f_n| < \infty \}$, where
 we denote the Fourier expansion of $f$ on $S^1$ by $f(\l)=\sum_{n \in \Z} f_n \l^n$.
 Then the Wiener algebra is a Banach algebra relative to the norm $\|f\| = \sum |f_n|$
 and the loop group $\LSU$ is a Banach Lie group, \cite{Gohberg}.

 Let $\D^{+}$ and $\D^{-}$ be the interior of the unit disk in 
 the complex plane and the union of the exterior of the unit disk in the complex plane 
 and infinity, respectively. 
 We first define two subgroups 
 of $\LSU$:
\begin{eqnarray}
 \LSUP  =\left\{ g \in\LSU \;|\;\mbox{$g$ can be analytically  extended 
 to $\D^+$} \right\}, \\
 \LSUN  =\left\{ g \in\LSU \;|\;\mbox{$g$ can analytically  be
 extended to $\D^{-}$} \right\}.
\end{eqnarray}
 Then $\LSUPN$ and $\LSUNN$ denote subgroups 
 of $\LSUP$ and $\LSUN$ normalized at $\l =0$ and $\l = \infty$, respectively:
\eq{
 \LSUPN =\left\{ g \in \LSUP \;|\; g(\l =0) = \id \right\},\\
 \LSUNN =\left\{ g \in \LSUN \;|\; g(\l =\infty) = \id \right\}. 
}
 The following decomposition theorem is fundamental.
\begin{Theorem}[Birkhoff decomposition, \cite{Gohberg, Brander}]
 \label{thm:Birkhoff}
 The multiplication maps 
\eq{
 \LSUPN \times \LSUN \to \LSU \;\;\mbox{and}\;\; 
 \LSUNN \times \LSUP \to \LSU
}
 are diffeomorphisms onto $\LSU$, respectively.
\end{Theorem}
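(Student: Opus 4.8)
The plan is to establish both multiplication maps at once. Writing $\iota$ for the inversion $\l\mapsto 1/\l$, one checks that $\iota$ preserves $\LSU$ (it is compatible with both the reality and the twisting conditions), interchanges $\D^+$ and $\D^-$, and hence interchanges $\LSUPN$ with $\LSUNN$ and $\LSUP$ with $\LSUN$. Consequently a decomposition $F=F_+F_-$ is carried by $\iota$ to a decomposition of the second type, so it suffices to treat the single map $\mu:\LSUNN\times\LSUP\to\LSU$, $(G_-,G_+)\mapsto G_-G_+$, and recover the other by pulling back along $\iota$.

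I would first dispose of injectivity (hence uniqueness of the factorization). If $G_-G_+=\tilde G_-\tilde G_+$ with $G_-,\tilde G_-\in\LSUNN$ and $G_+,\tilde G_+\in\LSUP$, then $\tilde G_-^{-1}G_-=\tilde G_+G_+^{-1}$, where the left-hand side extends holomorphically to $\D^-$ and equals $\id$ at $\l=\infty$ while the right-hand side extends holomorphically to $\D^+$. The common value is therefore a holomorphic map $\mathbb{CP}^1\to\SL$ whose entries lie in the Wiener algebra and so are continuous, hence bounded, on the compact $\mathbb{CP}^1$; by Liouville it is constant, and its value $\id$ at $\infty$ forces both factorizations to coincide. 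The same computation gives $\LSUNN\cap\LSUP=\{\id\}$.

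For surjectivity I would invoke the matrix Wiener--Hopf (Birkhoff) factorization in the Wiener algebra \cite{Gohberg}: every loop in $\Lambda\SL$ with Wiener-algebra entries factors as $F=G_-\,\di(\l^{\kappa},\l^{-\kappa})\,G_+$ with $G_\pm$ holomorphic and invertible on $\D^\mp$, the partial index $\kappa\in\Z$ being an invariant of $F$ (the two indices sum to $0$ since $\det F=1$). The crux is to show $\kappa=0$ for every $F\in\LSU$. This is exactly where the particular real form matters: unitarity of $F$ along $\R^\times$ together with the twisting $\sigma F(\l)=F(-\l)$ excludes the would-be obstruction loops $\di(\l^{\kappa},\l^{-\kappa})$, $\kappa\neq0$, which fail to be unitary on $\R^\times$ and hence do not belong to $\LSU$. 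I expect the rigorous elimination of all non-big-cell strata --- equivalently, the proof that the rank-two holomorphic bundle obtained by clutching the trivial bundles over $\D^{\pm}$ along $F$ is trivial --- to be the main technical obstacle, and the point at which the reality condition must be used in an essential way. Granting $\kappa=0$, the uniqueness above permits the normalization $G_-(\infty)=\id$, so that $F=G_-G_+$ lies in the image of $\mu$.

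It remains to upgrade the continuous bijection $\mu$ to a diffeomorphism via the Banach inverse function theorem. At the identity the differential is $(\dot G_-,\dot G_+)\mapsto\dot G_-+\dot G_+$, i.e. the splitting of a $\l$-Laurent series with coefficients in $\su$ into its strictly negative part, which is the Lie algebra of $\LSUNN$, and its nonnegative part, which is the Lie algebra of $\LSUP$. Since the reality condition amounts to all Fourier coefficients lying in $\su$ and the twisting only fixes the parity of the occurring powers of $\l$, this truncation respects $\lsu$ and defines bounded complementary projections on the Wiener algebra, so the differential is a Banach-space isomorphism. Left and right translation then show that the differential is an isomorphism at every point; together with bijectivity this proves that $\mu$ is a global diffeomorphism, and applying $\iota$ yields the statement for $\LSUPN\times\LSUN\to\LSU$.
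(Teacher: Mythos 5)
The paper does not prove this theorem; it is quoted from \cite{Gohberg, Brander}, so there is no internal proof to compare against. Judged on its own terms, your outline gets the standard architecture right: the inversion $\l\mapsto 1/\l$ does exchange the two decompositions, the Liouville argument for uniqueness is correct (Wiener-algebra coefficients give continuity up to $S^1$, so the two holomorphic extensions glue to a bounded holomorphic map on $\mathbb{CP}^1$), and the inverse-function-theorem step is sound, since the Lie algebra of $\LSU$ consists of Laurent series with coefficients in $\su$ and the truncation projections are bounded on the Wiener algebra.

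The genuine gap is exactly where you flag it, and it cannot be waved away in the manner you suggest. Knowing that the middle factors $\di(\l^{\kappa},\l^{-\kappa})$, $\kappa\neq 0$, do not themselves belong to $\LSU$ does not show that a loop $F\in\LSU$ has vanishing partial indices: in the Gohberg factorization $F=G_-\,\di(\l^{\kappa},\l^{-\kappa})\,G_+$ the outer factors $G_\pm$ are only required to lie in the complex groups $\Lambda^{\mp}\SL$, not in the real form, so the product can perfectly well satisfy the reality condition while the middle term does not. (Indeed, applying the antiholomorphic involution $\tau(g)(\l)=\bigl(\overline{g(\bar\l)}^{-1}\bigr)^{T}$ to the factorization only returns the same multiset of partial indices $\{\kappa,-\kappa\}$ and yields no contradiction by itself.) Ruling out $\kappa\neq 0$ --- equivalently, proving triviality of the rank-two bundle clutched by $F$ --- is the entire content of the theorem for this real form, and it genuinely uses that the reality condition here is unitarity along $\R^{\times}$ (an ``almost split'' real form, where conjugation $\l\mapsto\bar\l$ preserves each of $\D^{\pm}$), not unitarity along $S^1$, for which the statement would be false: $\di(\l^{2},\l^{-2})$ is a twisted loop lying outside the big cell, and it is excluded here only because it fails to be unitary on $\R^{\times}$. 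This is precisely what \cite{Brander} supplies, so your proposal should be read as a correct reduction of the theorem to that reference rather than as a proof of it.
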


\bibliographystyle{plain}
\def\cprime{$'$}

\end{document}